\documentclass[12pt]{amsart}
\usepackage{amsmath,amssymb,amsthm}
\usepackage{enumitem}
\usepackage{mathtools}
\usepackage[numbers,sort&compress]{natbib}
\usepackage{hyperref}
\DeclareMathOperator{\hdim}{\dim_H}
\DeclareMathOperator{\mul}{mul}
\theoremstyle{plain}
\newtheorem{theorem}{Theorem}[section]
\newtheorem{lemma}[theorem]{Lemma}
\newtheorem{proposition}[theorem]{Proposition}
\newtheorem{corollary}[theorem]{Corollary}
\theoremstyle{remark}
\newtheorem{remark}[theorem]{Remark}
\counterwithin{equation}{section}
\allowdisplaybreaks
\begin{document}
\title[Upper and lower fluctuations]{Upper and lower fluctuations in Shallit's law of leap years in Pierce expansions}
\author{Min Woong Ahn}
\address{Department of Mathematics Education, Dankook University, 152, Jukjeon-ro, Suji-gu, Yongin-si, Gyeonggi-do 16890, Republic of Korea}
\email{mwahn@dankook.ac.kr}
\date{\today}
\subjclass[2020]{Primary 11K55; Secondary 28A80}
\keywords{leap year, Pierce expansion, upper and lower limit, exceptional set, Hausdorff dimension}
\begin{abstract}
Shallit's law of leap years in Pierce expansions gives, for Lebesgue-almost every $x\in[0,1]$, the upper and lower limits of a normalized difference between $Nx$ and the number of leap years up to the year $N$ determined by the Pierce expansion digits of $x$. In this paper, we show that, for every $x\in[0,1]$, both limits are determined by the lower limit of a normalized logarithm of the product of the first $n$ digits of $x$. In particular, the upper and lower limits always have the same absolute value. As applications, we characterize the sets of points with prescribed upper and lower limits, and show that each of these sets, if non-empty, is dense in $[0,1]$ and its intersection with any non-empty open subset of $[0,1]$ has full Hausdorff dimension. We also compare these sets with the sets defined by the growth rate of the digits, and show that, for each finite parameter, their difference has full Hausdorff dimension.
\end{abstract}
\maketitle
\tableofcontents
\section{Introduction} \label{Introduction}
Shallit \cite{Sha94} introduced a rule for determining leap years which generalizes both the Julian and Gregorian calendars. An \emph{intercalation sequence} is a finite or infinite sequence $\sigma=(\sigma_k)_{k\geq1}$ of positive integers satisfying $\sigma_k\geq2$ for all $k\geq2$. The year $N$ is declared to be a leap year if
\[
\sum_{k\geq1}(-1)^{k+1}\mul(N,\sigma_1\dotsm\sigma_k)=1,
\]
where $\mul(m,n)=1$ if $m$ is a multiple of $n$ and $\mul(m,n)=0$ otherwise. If $L(\sigma,N)$ denotes the number of leap years from year $1$ through year $N$, then Shallit proved the formula
\begin{align} \label{L formula}
L(\sigma,N)=\sum_{k\geq1}(-1)^{k+1}\left\lfloor\frac{N}{\sigma_1\dotsm\sigma_k}\right\rfloor.
\end{align}
The Pierce expansion of $x\in[0,1]$ is given by
\begin{align} \label{Pierce expansion}
x
&=\sum_{k\in\mathbb N}\frac{(-1)^{k+1}}{d_1(x)\dotsm d_k(x)} \nonumber\\
&=\frac1{d_1(x)}-\frac1{d_1(x)d_2(x)}+\frac1{d_1(x)d_2(x)d_3(x)}-\dotsb,
\end{align}
where the digits $d_k(x)$ are strictly increasing as long as they are finite (see Section \ref{Preliminaries} for the precise definition). As usual, the digit sequence of a rational point consists of finitely many finite digits followed by $\infty$'s, and we use the convention $1/\infty=0$. For a rational $x\neq0$ whose last finite digit is $d_m(x)$, we set $L((d_k(x))_{k\in\mathbb N},N)\coloneqq L((d_1(x),\dotsc,d_m(x)),N)$, and we set $L((d_k(0))_{k\in\mathbb N},N)\coloneqq0$. In other words, the terms in \eqref{L formula} involving an infinite digit are interpreted as zero. We refer the reader to \cite{Ahn23,Pie29,Sch95,Sha86} for basic facts about Pierce expansions.\par
For $x\in[0,1]$ and an integer $N\geq2$, put
\begin{align*}
Q_x(N)\coloneqq\frac{Nx-L((d_k(x))_{k\in\mathbb N},N)}{\sqrt{\log N}}.
\end{align*}
Taking the Pierce expansion digit sequence as an intercalation sequence, Shallit \cite[Theorem 3]{Sha94} proved that, for Lebesgue-almost every $x\in[0,1]$,
\begin{align} \label{Shallit law}
\limsup_{N\to\infty}Q_x(N)=\frac1{\sqrt2}
\quad\text{and}\quad
\liminf_{N\to\infty}Q_x(N)=-\frac1{\sqrt2}.
\end{align}
In \cite{Ahn25}, we considered, for each $\alpha\in[0,\infty]$, the set
\begin{align} \label{definition S alpha}
S(\alpha)\coloneqq\biggl\{x\in[0,1]:\ &\limsup_{N\to\infty}Q_x(N)=\frac1{\sqrt{2\alpha}} \text{ and } \liminf_{N\to\infty}Q_x(N)=-\frac1{\sqrt{2\alpha}}\biggr\},
\end{align}
with the conventions $1/0=\infty$ and $1/\infty=0$, and showed that $S(\alpha)$ is dense in $[0,1]$ and that
\[
\hdim(U\cap S(\alpha))=1
\]
for every non-empty open subset $U$ of $[0,1]$ and every $\alpha\in[0,\infty]$.\par
The proof in \cite{Ahn25} relied on the set
\begin{align} \label{definition A alpha}
A(\alpha)\coloneqq\left\{x\in[0,1]:\lim_{n\to\infty}\frac{\log d_n(x)}n=\alpha\right\},
\end{align}
defined for each $\alpha\in[0,\infty]$, which has Hausdorff dimension $1$ by \cite[Corollary 1.3]{Ahn24}, and on the inclusion $A(\alpha)\subseteq S(\alpha)$. It was also shown in \cite[Lemma 3.6]{Ahn25} that every $x\in A(\alpha)$ satisfies
\begin{align} \label{old product growth}
\lim_{n\to\infty}\frac{2\log(d_1(x)\dotsm d_n(x))}{n^2}=\alpha.
\end{align}
The inclusion $A(\alpha)\subseteq S(\alpha)$ was sufficient for determining the Hausdorff dimension, but it does not characterize $S(\alpha)$. In particular, it does not tell us whether there exist points at which the two limits in \eqref{definition S alpha} have different absolute values. In view of \eqref{old product growth}, it is natural to consider the lower limit of the quotient in \eqref{old product growth} without assuming the existence of the limit.\par
In this paper, we answer these questions. For $x\in[0,1]$, define
\begin{align} \label{definition gamma}
\gamma(x)\coloneqq\liminf_{n\to\infty}
\frac{2\log(d_1(x)\dotsm d_n(x))}{n^2},
\end{align}
with the convention $\log\infty=\infty$. Our main result shows that $\gamma(x)$ determines both the upper and lower limits of $Q_x(N)$.
\begin{theorem} \label{main theorem}
For every $x\in[0,1]$, we have
\begin{align} \label{main formula}
\limsup_{N\to\infty}Q_x(N)=\frac1{\sqrt{2\gamma(x)}}
\quad\text{and}\quad
\liminf_{N\to\infty}Q_x(N)=-\frac1{\sqrt{2\gamma(x)}},
\end{align}
with the conventions $1/\sqrt0=\infty$ and $1/\sqrt\infty=0$.
\end{theorem}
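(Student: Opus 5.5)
The starting point is an exact formula for the numerator of $Q_x(N)$. Write $q_k=q_k(x)=d_1(x)\dotsm d_k(x)$ and $\{\cdot\}$ for the fractional part. Subtracting \eqref{L formula} from $Nx=\sum_k(-1)^{k+1}N/q_k$ (from \eqref{Pierce expansion}) gives
\[
Nx-L((d_k(x))_{k\in\mathbb N},N)=\sum_{k\geq1}(-1)^{k+1}\Bigl\{\frac N{q_k}\Bigr\}.
\]
Let $m=m(N)$ be the number of $k$ with $q_k\le N$. For $k>m$ we have $\{N/q_k\}=N/q_k$. These terms form an alternating series with decreasing terms, so their total contribution lies between $-1$ and $1$. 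Hence the whole problem reduces to understanding the alternating sum of the first $m$ fractional parts. Rational $x$ (including $x=0$) have only finitely many nonzero terms, so both limits are $0$; this matches $\gamma(x)=\infty$ under the convention $\log\infty=\infty$. I will therefore assume $x$ is irrational. Then $d_k\ge k$, because the digits are strictly increasing.

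\emph{Upper bound.} Among the first $m$ terms, at most $\lceil m/2\rceil$ carry a plus sign, and each fractional part is less than $1$. So $Nx-L\le m/2+2$, and symmetrically $Nx-L\ge -m/2-2$. Now fix $\varepsilon>0$ with $\varepsilon<\gamma(x)$. For large $n$ the definition \eqref{definition gamma} gives $\log q_n\ge(\gamma(x)-\varepsilon)n^2/2$. Since $q_m\le N$, this yields $m\le\sqrt{2\log N/(\gamma(x)-\varepsilon)}$ for large $N$. Therefore
\[
\limsup_{N\to\infty}Q_x(N)\le\frac1{\sqrt{2(\gamma(x)-\varepsilon)}}
\quad\text{and}\quad
\liminf_{N\to\infty}Q_x(N)\ge-\frac1{\sqrt{2(\gamma(x)-\varepsilon)}}.
\]
Letting $\varepsilon\to0$ gives the upper half of \eqref{main formula}. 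This also settles the case $\gamma(x)=\infty$, where both limits must be $0$. If $\gamma(x)=0$, the upper bound is vacuous.

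\emph{Lower bound (main step).} Here I must exhibit integers $N$ at which the fractional parts align with the signs. I will use the mixed-radix representation $N=\sum_{j=0}^{n-1}c_jq_j$, where $q_0=1$ and $0\le c_j<d_{j+1}$. In this representation, $N \bmod q_k=\sum_{j<k}c_jq_j$.
\begin{itemize}[leftmargin=*]
\item Choose $c_{k-1}=d_k-1$ for odd $k\le n$, and $c_j=0$ otherwise.
\item For odd $k$ this gives $\{N/q_k\}\ge (d_k-1)/d_k=1-1/d_k$.
\item For even $k$ it gives $\{N/q_k\}\le q_{k-1}/q_k=1/d_k$.
\item Since $N<q_n$, the terms with $k\le n$ contribute at least $\lceil n/2\rceil-\sum_{k\le n}1/d_k\ge n/2-O(\log n)$, using $d_k\ge k$.
\item The terms with $k>n$ contribute at least $-1$.
\end{itemize}
Next, choose $n$ along a subsequence on which $\log q_n\le(\gamma(x)+\varepsilon)n^2/2$; this is possible by the definition of $\liminf$. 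Along it, $\log N\le\log q_n$, so
\[
Q_x(N)\ge\frac{n/2-O(\log n)}{n\sqrt{(\gamma(x)+\varepsilon)/2}}\longrightarrow\frac1{\sqrt{2(\gamma(x)+\varepsilon)}}.
\]
The same inequality gives $\limsup_{N\to\infty}Q_x(N)=\infty$ when $\gamma(x)=0$, by letting $\varepsilon\to0$. I also need $N\to\infty$; this holds because $N\ge q_{n-1}-1$ or so.

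Swapping the parity (set $c_{k-1}=d_k-1$ for even $k$) gives the symmetric bound for the $\liminf$. The constructive alignment of the fractional parts is the only nontrivial step. The points to check there are the bound $\{N/q_k\}\le 1/d_k$ for the "off" indices, namely the telescoping estimate $\sum_{j<k}c_jq_j<q_{k-1}$, and that the harmonic-type error $\sum_{k\le n}1/d_k$ is $o(n)$.
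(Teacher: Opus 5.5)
Your proposal is correct and follows essentially the paper's route. The upper bound is the paper's global estimate (Lemma \ref{global bound lemma}), and your mixed-radix integers are precisely Shallit's integers: $N_{2r+1}$ on the positive side and $M_{2r}-1$ on the negative side (the other parity of $n$ just reproduces the previous one). The differences are only in bookkeeping. Reading the fractional parts off the mixed-radix digits and absorbing all errors into $\sum_{k\le n}1/d_k\le 1+\log n$ lets you handle every $n$ and every finite $\gamma(x)$, including $\gamma(x)=0$, in one stroke. So you need neither Lemma \ref{parity lemma}, nor the paper's split into the summable and crude cases of Lemma \ref{canonical excursion lemma}, nor the appeal to Shallit's Theorem 2.
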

In particular, for every $x\in[0,1]$,
\[
\limsup_{N\to\infty}Q_x(N)
=-\liminf_{N\to\infty}Q_x(N),
\]
i.e., the upper and lower limits of $Q_x(N)$ always have the same absolute value.\par
As a first consequence of Theorem \ref{main theorem}, we obtain the following characterization of the sets $S(\alpha)$.
\begin{corollary}\label{S alpha characterization}
For each $\alpha\in[0,\infty]$, we have
\[
S(\alpha)=\{x\in[0,1]:\gamma(x)=\alpha\}.
\]
In particular, this set is dense in $[0,1]$, and its intersection with any non-empty open subset of $[0,1]$ has Hausdorff dimension $1$.
\end{corollary}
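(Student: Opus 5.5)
The identity $S(\alpha)=\{x:\gamma(x)=\alpha\}$ should follow directly from Theorem \ref{main theorem}. For the dimension and density statements I plan to sandwich this set between $A(\alpha)$ and $[0,1]$ and reuse the results of \cite{Ahn25}.

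\textbf{The identity.} Fix $\alpha\in[0,\infty]$ and $x\in[0,1]$. By Theorem \ref{main theorem}, $\limsup_{N\to\infty}Q_x(N)=1/\sqrt{2\gamma(x)}$ and $\liminf_{N\to\infty}Q_x(N)=-1/\sqrt{2\gamma(x)}$. Hence $x\in S(\alpha)$ if and only if $1/\sqrt{2\gamma(x)}=1/\sqrt{2\alpha}$.

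The map $t\mapsto 1/\sqrt{2t}$ is a bijection from $[0,\infty]$ onto $[0,\infty]$ under the conventions $1/\sqrt0=\infty$ and $1/\sqrt\infty=0$. So the condition holds exactly when $\gamma(x)=\alpha$. One small check is needed: the second condition in \eqref{definition S alpha} imposes nothing extra, because Theorem \ref{main theorem} already makes the lower limit equal to minus the upper limit.

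\textbf{Density and full dimension.} By \cite{Ahn25}, $S(\alpha)$ is dense in $[0,1]$ and $\hdim(U\cap S(\alpha))=1$ for every non-empty open $U\subseteq[0,1]$. By the identity just proved, these properties transfer to $\{x:\gamma(x)=\alpha\}$.

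For a self-contained argument, I would instead use the inclusion $A(\alpha)\subseteq\{\gamma=\alpha\}$. This holds because \eqref{old product growth}, i.e.\ \cite[Lemma 3.6]{Ahn25}, shows that the $\liminf$ in \eqref{definition gamma} is actually a limit equal to $\alpha$ on $A(\alpha)$. The localized statement that $A(\alpha)$ is dense and meets every open set in full dimension, as used in \cite{Ahn25}, then gives the claim, since Hausdorff dimension is monotone and bounded by $1$.

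\textbf{Where care is needed.} The main obstacle is only bookkeeping of edge cases. The case $\alpha=0$ corresponds to infinite upper and lower limits, and the case $\alpha=\infty$ corresponds to both limits being $0$. For $\alpha=\infty$, the rational points with finitely many finite digits have $\gamma=\infty$ by the convention $\log\infty=\infty$. These points should be checked against the convention for $L$ at rational $x$, but Theorem \ref{main theorem} is stated for every $x$, so this case is already covered.
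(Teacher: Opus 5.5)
Your proposal is correct and is the same as the paper's proof. The identity follows from Theorem \ref{main theorem} because $t\mapsto 1/\sqrt{2t}$ is a bijection of $[0,\infty]$ under the stated conventions, and density and local full dimension are transferred from Proposition \ref{old S theorem}. Your optional alternative via $A(\alpha)$ is unnecessary, and as written it relies on a localized density and dimension statement for $A(\alpha)$ that this paper does not state; it only cites $\hdim A(\alpha)=1$.
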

For $a,b\in[-\infty,\infty]$, define
\begin{align*}
E(a,b)\coloneqq\left\{x\in[0,1]:
\limsup_{N\to\infty}Q_x(N)=a
\text{ and }
\liminf_{N\to\infty}Q_x(N)=b
\right\}.
\end{align*}
The following corollary, which follows from Theorem \ref{main theorem} and Corollary \ref{S alpha characterization}, completely determines which of the sets $E(a,b)$ are non-empty.
\begin{corollary} \label{pair classification}
Let $a,b\in[-\infty,\infty]$. Then $E(a,b)$ is non-empty if and only if $a=-b\in[0,\infty]$. More precisely, if $u\in(0,\infty)$, then
\[
E(u,-u)=S\left(\frac1{2u^2}\right),
\]
while $E(0,0)=S(\infty)$ and $E(\infty,-\infty)=S(0)$. Consequently, every non-empty set $E(a,b)$ is dense in $[0,1]$, and its intersection with any non-empty open subset of $[0,1]$ has Hausdorff dimension $1$.
\end{corollary}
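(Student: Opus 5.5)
Corollary \ref{pair classification} follows from Theorem \ref{main theorem} and Corollary \ref{S alpha characterization} by unwinding the definitions. Fix $x\in[0,1]$. By Theorem \ref{main theorem}, $x\in E(a,b)$ holds exactly when $a=1/\sqrt{2\gamma(x)}$ and $b=-1/\sqrt{2\gamma(x)}$. The map $\gamma\mapsto 1/\sqrt{2\gamma}$, with the stated conventions, sends $[0,\infty]$ into $[0,\infty]$. So the limsup of $Q_x(N)$ always lies in $[0,\infty]$ and the liminf equals its negative. Hence if $E(a,b)\neq\emptyset$, then $a=-b\in[0,\infty]$. This proves the ``only if'' direction.

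For the ``if'' direction, together with the explicit identifications, I would treat three cases.
\begin{itemize}[leftmode=*]
\item \emph{$a=u\in(0,\infty)$, $b=-u$.} Then $1/\sqrt{2\gamma(x)}=u$ iff $\gamma(x)=1/(2u^2)\in(0,\infty)$. This uses that $t\mapsto 1/\sqrt{2t}$ is a bijection of $(0,\infty)$ onto itself, and that $\gamma(x)\in\{0,\infty\}$ gives the values $\infty$ or $0$, never $u$. Hence $E(u,-u)=\{x:\gamma(x)=1/(2u^2)\}$, which equals $S(1/(2u^2))$ by Corollary \ref{S alpha characterization}.
\item \emph{$a=b=0$.} Here $1/\sqrt{2\gamma(x)}=0$ iff $\gamma(x)=\infty$, so $E(0,0)=S(\infty)$.
\item \emph{$a=\infty$, $b=-\infty$.} Here $1/\sqrt{2\gamma(x)}=\infty$ iff $\gamma(x)=0$, so $E(\infty,-\infty)=S(0)$.
\end{itemize}

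In each case $E(a,b)$ equals some $S(\alpha)$ with $\alpha\in[0,\infty]$. By Corollary \ref{S alpha characterization} (or the earlier result of \cite{Ahn25}), every $S(\alpha)$ is dense in $[0,1]$ and meets every non-empty open $U\subseteq[0,1]$ in a set of Hausdorff dimension $1$. In particular $S(\alpha)$ is non-empty, which gives non-emptiness of $E(a,b)$ whenever $a=-b\in[0,\infty]$, and also gives the density and dimension statements.

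There is no real obstacle here. The only point needing care is the bookkeeping of the conventions $1/\sqrt0=\infty$ and $1/\sqrt\infty=0$, so that the three cases are exhaustive and disjoint.
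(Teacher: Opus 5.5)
Your proposal is correct and follows the paper's proof. The ``only if'' direction comes from Theorem \ref{main theorem}, which forces the upper limit into $[0,\infty]$ and the lower limit to be its negative, and the density and dimension statements are inherited from Corollary \ref{S alpha characterization}. The one minor difference is that the paper reads off $E(u,-u)=S(1/(2u^2))$, $E(0,0)=S(\infty)$ and $E(\infty,-\infty)=S(0)$ directly from the definition \eqref{definition S alpha} of $S(\alpha)$, so your detour through $\gamma(x)$ for these identifications is valid but unnecessary.
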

By \eqref{old product growth} and Corollary \ref{S alpha characterization}, we have $A(\alpha)\subseteq S(\alpha)$ for each $\alpha\in[0,\infty]$. The following theorem shows that, for each finite $\alpha$, the difference $S(\alpha)\setminus A(\alpha)$ is still large in the Hausdorff dimension sense.
\begin{theorem} \label{difference theorem}
For each $\alpha\in[0,\infty)$, we have
\[
\hdim\bigl(S(\alpha)\setminus A(\alpha)\bigr)=1.
\]
In particular, $A(\alpha)\subsetneq S(\alpha)$ for each $\alpha\in[0,\infty)$. Moreover, $A(\infty)=S(\infty)$.
\end{theorem}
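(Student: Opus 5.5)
We prove the three assertions of Theorem \ref{difference theorem} in turn: first $A(\infty)=S(\infty)$, then the dimension statement for finite $\alpha$.

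\textbf{The case $\alpha=\infty$.} By Corollary \ref{S alpha characterization}, $S(\infty)=\{x:\gamma(x)=\infty\}$, and by \eqref{old product growth} we already have $A(\infty)\subseteq S(\infty)$. For the reverse inclusion, take $x$ with $\gamma(x)=\infty$. If $x$ is rational, then $d_n(x)=\infty$ eventually, so $\log d_n(x)/n=\infty$ eventually and $x\in A(\infty)$. If $x$ is irrational, the digits are finite and strictly increasing. Then
\[
\log(d_1\dotsm d_n)\le n\log d_n,
\]
so
\[
\frac{\log d_n}{n}\ge\frac{\log(d_1\dotsm d_n)}{n^2}\to\infty,
\]
since $\gamma(x)=\infty$ is a lower limit. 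Hence $x\in A(\infty)$.

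\textbf{The case $\alpha<\infty$: construction.} The upper bound $\hdim\le1$ is trivial, so we need a lower bound. The plan is to build a Cantor-type subset of $S(\alpha)\setminus A(\alpha)$ of dimension at least $1-\varepsilon$ for each $\varepsilon>0$. We perturb the standard construction behind $\hdim A(\alpha)=1$ from \cite{Ahn24}. Choose a very sparse sequence of times $n_j$ with $n_{j+1}/n_j\to\infty$, and fix a small constant $\delta>0$.
\begin{enumerate}[label=(\roman*)]
\item For most indices $n$, allow $d_n$ to range freely in a window $[e^{\alpha n},\,2e^{\alpha n}]$ (or a polynomially growing window $[n^{K},2n^{K}]$ when $\alpha=0$). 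Digits must also exceed the previous one; this is automatic for $\alpha>0$ and can be enforced for $\alpha=0$.
\item At the exceptional indices $n=n_j$, force one large digit $d_{n_j}\approx e^{(\alpha+\delta)n_j}$.
\end{enumerate}
Then $\log d_{n_j}/n_j\to\alpha+\delta$, so $x\notin A(\alpha)$. For $\alpha=0$ we instead use $d_{n_j}\approx e^{\delta n_j}$, which must still be compatible with monotonicity. The inserted large digit forces every later digit to exceed it. Because the $n_j$ are sparse, this constraint only affects digits up to about $n_j(1+\delta/\alpha)$. The resulting distortion of $\log(d_1\dotsm d_n)$ is $O(\delta n_j^2)$ at times $n\sim n_j$, which is $o(n^2)$ only after rescaling. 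To fix this, we let $\delta=\delta_j\to0$ slowly while keeping $\delta_j n_j\to\infty$. Then $\log d_{n_j}/n_j-\alpha\ge\delta_j$ still does not converge to zero in the required sense. A cleaner alternative is to use a fixed $\delta$ but a single exceptional digit whose effect on the product is $O(n_j)$, and to let the subsequent digits continue from $d_{n_j}$ upward at rate $e^{\alpha}$ rather than catching up. This also perturbs the product by only $O(\delta n_j^2)$, so a fixed $\delta$ breaks the limit $\gamma(x)=\alpha$. Therefore the choice $\delta_j\to0$ with $\delta_j$ not too small is the one to pursue. In that case $\log d_{n_j}/n_j\to\alpha$ still holds, so the resulting point lies in $A(\alpha)$ and does not serve. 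The fix is to take the exceptional digits even larger relative to $n_j$ but only at sparse indices: set $d_{n_j}=\lceil e^{(\alpha+1)n_j}\rceil$, and reset later digits to the minimal allowed growth $d_{n}=d_{n-1}+$ (free bounded choice) until $e^{\alpha n}$ catches up at $n\approx(1+1/\alpha)n_j$. The product over this stretch then grows like $n_j\cdot(\alpha+1)n_j$, which is $O(n_j^2)$, a positive proportion. So $\gamma$ must be checked carefully. Since only the lower limit matters, overshooting raises $\gamma$ only if it persists. I would instead make the exceptional digits small: drop $d_n$ to the minimal increasing choice on blocks $[n_j,2n_j]$. This lowers $\log(d_1\dotsm d_n)/n^2$ for a while but, with sparse $n_j$, still makes $\log d_n/n$ have $\liminf<\alpha$. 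The difficulty is that $\gamma$ then drops below $\alpha$ too. The correct construction therefore uses occasional single huge digits with catch-up handled so that the product contribution is $o(n^2)$, for instance $d_{n_j}=e^{2\alpha n_j}$ while later digits are sandwiched just above it only for $o(n_j)$ steps. This requires choosing the exceptional growth so that $\log d_{n_j}=(\alpha+\delta)n_j$ while the catch-up length is $\delta n_j/\alpha$. The product excess is then $O(\delta^2n_j^2)$, and its ratio to $n^2$ is $O(\delta^2)$. Letting $\delta=\delta_j\to0$ slowly keeps $\gamma(x)=\alpha$, but then $\log d_n/n\to\alpha$ again. This is the main obstacle: membership in $A(\alpha)$ versus $\gamma(x)=\alpha$ must be separated by \emph{downward} excursions of $\log d_n/n$, which cost little in the product. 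The construction that works is to keep $d_n$ nearly constant (increasing by $1$) on blocks $[n_j,(1+\eta)n_j]$ and then jump back. This yields $\liminf\log d_n/n\le\alpha/(1+\eta)<\alpha$, while the product deficit is $O(\eta^2 n_j^2)$. Taking $\eta=\eta_j\to0$ keeps $\gamma(x)=\alpha$ but gives $\liminf\log d_n/n\to\alpha$ again. Hence take instead blocks of length $\eta_jn_j$ with $\eta_j\to0$ but chosen so that $\log d_n/n$ at the block end is $\alpha/(1+\eta_j)$. We must accept $\gamma$-compatible non-convergence of $\log d_n/n$, such as oscillation at a single index: set $d_{n_j+1}=d_{n_j}+1$, which gives $\log d_{n_j+1}/(n_j+1)\to\alpha$ anyway. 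For $\alpha>0$ the cleanest option is a single huge digit with the later digits free. Choosing $d_{n_j}\approx e^{2\alpha n_j}$ forces the following $n_j$ digits above $e^{2\alpha n_j}$, so the product excess is about $\alpha n_j^2$. With $n_{j+1}\gg n_j$, the quantity $2\log(d_1\dotsm d_n)/n^2$ at $n=2n_j$ exceeds $\alpha$, but only the liminf matters and it still attains $\alpha$ at $n\approx n_{j+1}/2$. Thus $\gamma(x)=\alpha$ while $\limsup\log d_n/n\ge2\alpha$. For $\alpha=0$ use $d_{n_j}\approx e^{n_j}$, which similarly leaves $\gamma=0$.

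\textbf{Dimension estimate.} The digits are free in windows of multiplicative size $2$ at all but the sparse forced indices, and after each forced index they are free in windows of size $2$ above it. The mass distribution principle, applied as in \cite{Ahn24}, then gives dimension $1$. The Pierce cylinder of the first $n$ digits has length of order $(d_1\dotsm d_n\,d_n)^{-1}$, and the number of choices is of the same order up to a factor $e^{O(n)}$, which is negligible against $n^2$ for $\alpha>0$. For $\alpha=0$ we use polynomial windows, where this cylinder-counting step is the delicate part.
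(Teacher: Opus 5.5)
\paragraph{Verdict.} Your argument for $A(\infty)=S(\infty)$ is correct and is the paper's. The construction you finally settle on for finite $\alpha$ is the right idea. It uses sparse forced digits with $\log d_{n_j}\approx2\alpha n_j$ (or $\log d_{n_j}\approx n_j$ when $\alpha=0$), keeps later digits above $d_{n_j}$ until $e^{\alpha n}$ catches up, and realizes $\gamma(x)=\alpha$ along $n\approx n_{j+1}/2$. This is exactly the digit profile of the paper's $F(\varphi)$ with $\varphi(n)=\max\{\alpha n,\beta m_j:m_j\le n\}$ and $\beta=2\alpha$.

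You should delete the exploratory text before it, because it contains false intermediate claims. For instance, a fixed $\delta$ does \emph{not} break $\gamma(x)=\alpha$. Only the lower limit matters, and an excess of order $n_j^2$ is negligible at $n\approx n_{j+1}/2$ once $n_{j+1}/n_j\to\infty$.

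\paragraph{The gap: the lower bound on dimension.} You assert $\hdim\ge1$ but do not prove it, and the sketch fails as stated in three places.
\begin{enumerate}
\item For $0<\alpha\le\log2$ the windows $[e^{\alpha n},2e^{\alpha n}]$ overlap, so monotonicity of the digits is not automatic.
\item If the windows after $d_{n_j}$ are successive doubling windows, then $\log d_{n_j+k}\ge\log d_{n_j}+(k-1)\log2$. So for $\alpha<\log2$ the digits never rejoin $e^{\alpha n}$, and $\gamma(x)\ge\log2>\alpha$.
\item For $\alpha=0$, polynomial windows with a fixed exponent cannot give dimension $1$. The set of $x$ with $n^K\le d_n\le2n^K$ for all $n$ is covered by at most $\binom{2n^K}{n}\le(2en^{K-1})^n$ cylinders of level $n$. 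Each has length at most $(n!)^{-K}$, so the set has dimension at most $(K-1)/K<1$. The mismatch between the number of cylinders and the reciprocal of their length is $e^{\Theta(n\log n)}$, not $e^{O(n)}$. That is of the same logarithmic order as the cylinder lengths themselves, so it is not negligible.
\end{enumerate}

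\paragraph{The missing idea.} You do not need a mass distribution argument at all. Proposition \ref{Ahn24 full dimension proposition} gives $\hdim F(\varphi)=1$ for every non-decreasing $\varphi$ with $\varphi(n)/\log n\to\infty$ and $\varphi(n+1)/\sum_{k\le n}\varphi(k)\to0$. Both of the following satisfy these hypotheses:
\begin{itemize}
\item $\varphi(n)=\max\{\alpha n,\beta m_j:m_j\le n\}$ with $\beta>\alpha$;
\item for $\alpha=0$, $\varphi(n)=\max\{\sqrt n,m_j:m_j\le n\}$. The baseline $\sqrt n$ replaces your polynomial one precisely so that $\varphi(n)/\log n\to\infty$.
\end{itemize}
What remains is elementary.
\begin{itemize}
\item $\log d_{m_j}/m_j\to\beta\ne\alpha$ (resp.\ $\to1$ when $\alpha=0$) excludes $A(\alpha)$.
\item $\liminf_n\frac2{n^2}\sum_{k\le n}\varphi(k)=\alpha$ holds along $q_j=\lfloor m_{j+1}/2\rfloor$, using the rapid growth of $(m_j)$. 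It transfers to $\gamma(x)=\alpha$ through the bounds $(1\pm\varepsilon)\varphi(k)$ on $\log d_k(x)$.
\end{itemize}
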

We mention some recent results related to the growth of Pierce expansion digits. Lu and Long \cite{LuLong25} studied level sets defined by the lower and upper limits of the normalized growth of the digits, and Long, Liang, and Lu \cite{LongLiangLu25} investigated limsup and liminf threshold sets for the digits. Long, Lu, and Liang \cite{LongLuLiang25} determined the Hausdorff dimension of joint liminf--limsup level sets for fluctuations of $\log d_n(x)-n$ of sublinear order. Note that these sets are defined directly in terms of individual digits. On the other hand, as Theorem \ref{main theorem} shows, the upper and lower limits of $Q_x(N)$ depend only on the growth of the products $d_1(x)\dotsm d_n(x)$.\par
We briefly describe the idea of the proof of Theorem \ref{main theorem}. For the upper bound of $|Q_x(N)|$, we use an estimate which holds for all $N$ (Lemma \ref{global bound lemma}). For the reverse inequalities, we use the integers introduced by Shallit in \cite[Theorem 2]{Sha94} (see Lemma \ref{canonical excursion lemma}). We remark that the argument uses only the lower limit in \eqref{definition gamma}, and not the existence of the limit of $(\log d_n(x))/n$ as $n\to\infty$.\par
This paper is organized as follows. In Section \ref{Preliminaries}, we recall some elementary facts about Pierce expansions and some earlier results which will be used later. In Section \ref{Auxiliary results}, we establish auxiliary results needed in the proof of Theorem \ref{main theorem}. We prove Theorem \ref{main theorem} in Section \ref{Proof main theorem}. In Section \ref{Consequences}, we prove Corollaries \ref{S alpha characterization} and \ref{pair classification} and Theorem \ref{difference theorem}.\par
Throughout the paper, we denote by $\mathbb N$ the set of positive integers, by $\mathbb N_\infty\coloneqq\mathbb N\cup\{\infty\}$ the set of extended positive integers, and by $\mathbb I\coloneqq[0,1]\setminus\mathbb Q$ the set of irrational numbers in $[0,1]$. For $y\in\mathbb R$, we write $\{y\}\coloneqq y-\lfloor y\rfloor$ for the fractional part of $y$. The closed unit interval $[0,1]$ is endowed with the usual topology. Following the usual convention, we define $c\cdot\infty\coloneqq\infty$ and $c/0\coloneqq\infty$ for any $c>0$, $1/\infty\coloneqq0$, and $\log\infty\coloneqq\infty$. The logarithm $\log$ always means the natural logarithm.
\section{Preliminaries} \label{Preliminaries}
In this section, we introduce some elementary facts about Pierce expansions and some results from \cite{Ahn24,Ahn25} which will be used later.\par
Let $d_1:[0,1]\to\mathbb N_\infty$ and $T:[0,1]\to[0,1]$ be given by
\[
d_1(x)\coloneqq
\begin{cases}
\lfloor1/x\rfloor,&\text{if } x\neq0,\\
\infty,&\text{if } x=0,
\end{cases}
\quad\text{and}\quad
T(x)\coloneqq
\begin{cases}
1-d_1(x)x,&\text{if } x\neq0,\\
0,&\text{if } x=0,
\end{cases}
\]
respectively, and define $d_n(x)\coloneqq d_1(T^{n-1}(x))$ for each $n\in\mathbb N$. When a point $x\in[0,1]$ is fixed, we write $d_n$ for $d_n(x)$ whenever no confusion can arise. Then
\[
d_{n+1}(x)\geq d_n(x)+1
\]
for each $n\in\mathbb N$ such that $d_n(x)$ is finite. In particular,
\begin{align} \label{digit lower bound}
d_n(x)\geq n
\end{align}
for all $n\in\mathbb N$. If $x\in\mathbb I$, then $(d_n(x))_{n\in\mathbb N}$ is a strictly increasing sequence of positive integers. Conversely, every strictly increasing sequence of positive integers is the Pierce expansion digit sequence of a unique irrational number in $[0,1]$. If $x$ is rational, then its digit sequence consists of finitely many finite digits followed by $\infty$'s. See, e.g., \cite[Section 2]{Ahn23}.\par
We shall use the following three results from \cite{Ahn24,Ahn25}.
\begin{proposition}[{\cite[Lemma 3.6]{Ahn25}}] \label{old product growth proposition}
Let $\alpha\in[0,\infty]$. If $x\in A(\alpha)$, then
\[
\lim_{n\to\infty}\frac{2\log(d_1(x)\dotsm d_n(x))}{n^2}=\alpha.
\]
\end{proposition}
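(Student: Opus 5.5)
The plan is to treat this as a Stolz--Ces\`aro statement for the sequence $a_k\coloneqq\log d_k(x)$. First I dispose of the degenerate case in which some digit is infinite, i.e.\ $x$ is rational. Then $d_n(x)=\infty$ for all large $n$, so $\log d_n(x)/n=\infty$ eventually, and hence $x\in A(\infty)$; the only possibility is therefore $\alpha=\infty$. With the convention $\log\infty=\infty$, the product $d_1(x)\dotsm d_n(x)$ is infinite for all large $n$, so the quotient $2\log(d_1(x)\dotsm d_n(x))/n^2$ equals $\infty$ eventually and the claimed limit $\alpha=\infty$ holds trivially.

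Now assume all digits are finite, so $a_k\in[0,\infty)$ for every $k$; indeed $d_k\geq k\geq1$ by \eqref{digit lower bound}. Put
\[
s_n\coloneqq\sum_{k=1}^n a_k=\log(d_1(x)\dotsm d_n(x))
\quad\text{and}\quad
b_n\coloneqq\frac{n^2}{2}.
\]
Then $(b_n)$ is strictly increasing and tends to $\infty$, and
\[
\frac{s_n-s_{n-1}}{b_n-b_{n-1}}=\frac{a_n}{n-\tfrac12}=\frac{a_n}{n}\cdot\frac{n}{n-\tfrac12}\longrightarrow\alpha,
\]
because $x\in A(\alpha)$ means $a_n/n\to\alpha$, and $n/(n-\frac12)\to1$. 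The Stolz--Ces\`aro theorem then gives $s_n/b_n\to\alpha$, which is exactly $2\log(d_1(x)\dotsm d_n(x))/n^2\to\alpha$.

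The only point requiring a little care is the case $\alpha=\infty$ with all digits finite. I would either invoke the version of Stolz--Ces\`aro valid for limit $+\infty$ (it holds since $b_n\uparrow\infty$), or argue directly. For the direct argument, given $M>0$ choose $k_0$ with $a_k\geq Mk$ for all $k\geq k_0$. Since $a_k\geq0$,
\[
s_n\geq M\sum_{k=k_0}^n k\geq M\Bigl(\frac{n^2}{2}-\frac{k_0^2}{2}\Bigr),
\]
so $\liminf_n s_n/b_n\geq M$ for every $M$, and the limit is $\infty$. I do not expect any genuine obstacle; the main thing is to keep the conventions for infinite digits consistent.
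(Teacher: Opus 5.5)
Your proof is correct and complete. Rational points lie only in $A(\infty)$, and there the quotient is eventually $\infty$ under the paper's conventions. For irrational $x$, Stolz--Ces\`aro with $b_n=n^2/2$ and $b_n-b_{n-1}=n-\tfrac12$ gives the limit uniformly for every $\alpha\in[0,\infty]$, including the endpoint cases.

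The paper itself does not prove this statement; it imports it from \cite[Lemma 3.6]{Ahn25}, so your argument supplies a self-contained proof. It is the same averaging idea the paper uses, in a multiplicative $\varepsilon$-sandwich form, to compute $\gamma(x)$ on $F(\varphi)$ in the proof of Theorem~\ref{difference theorem}.
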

The following proposition is a special case of \cite[Theorem 1.1]{Ahn24}.
\begin{proposition}[{\cite[Theorem 1.1]{Ahn24}}] \label{Ahn24 full dimension proposition}
Let $\varphi:\mathbb N\to(0,\infty)$ be a non-decreasing function such that
\[
\frac{\varphi(n)}{\log n}\to\infty
\quad\text{and}\quad
\frac{\varphi(n+1)}{\sum_{k=1}^n\varphi(k)}\to0
\]
as $n\to\infty$. Put
\[
F(\varphi)\coloneqq\left\{x\in(0,1]:\lim_{n\to\infty}\frac{\log d_n(x)}{\varphi(n)}=1\right\}.
\]
Then $\hdim F(\varphi)=1$.
\end{proposition}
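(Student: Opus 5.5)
The upper bound $\hdim F(\varphi)\le1$ is trivial, so the plan is to prove $\hdim F(\varphi)\ge1$. I would do this by building a Cantor-type subset of $F(\varphi)$ from prescribed digit windows and applying the mass distribution principle.

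\emph{Step 1: the digit windows.} For each $n$, set
\[
D_n\coloneqq\mathbb N\cap\bigl[\,n e^{\varphi(n)},\,(n+1)e^{\varphi(n)}\bigr),
\]
and let $K$ be the set of irrational $x$ with $d_n(x)\in D_n$ for all $n$. Because $\varphi$ is non-decreasing, $(n+1)e^{\varphi(n)}\le (n+1)e^{\varphi(n+1)}$. Hence every element of $D_n$ is smaller than every element of $D_{n+1}$, so any choice of digits from the windows is strictly increasing. By Section~\ref{Preliminaries} each such sequence is the digit sequence of a unique irrational point, so $K$ is well defined. Since $\log d_n=\varphi(n)+O(\log n)$ and $\varphi(n)/\log n\to\infty$, we get $K\subseteq F(\varphi)$. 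Each window has $\#D_n\asymp e^{\varphi(n)}$ elements, which is at least $1$ for large $n$. For the finitely many small $n$ where this might fail, I would fix a single admissible digit instead.

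\emph{Step 2: measure versus length.} Let $\mu$ be the probability measure on $K$ that chooses the $n$-th digit uniformly from $D_n$, independently over $n$. For a cylinder $I_n=I(d_1,\dots,d_n)$ meeting $K$:
\begin{itemize}[leftmargin=*]
\item $\mu(I_n)=\prod_{k\le n}(\#D_k)^{-1}$, so $-\log\mu(I_n)=\sum_{k\le n}\varphi(k)+O(n\log n)$.
\item The standard Pierce cylinder length is $|I_n|\asymp (d_1\dotsm d_n\,d_n)^{-1}$, up to bounded factors. The part of $I_n$ that actually carries mass of $K$ has diameter comparable to $(d_1\dotsm d_n\,d_{n+1})^{-1}$ with $d_{n+1}\in D_{n+1}$. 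So the relevant scale satisfies $-\log|I_n|=\sum_{k\le n}\varphi(k)+\varphi(n+1)+O(n\log n)$.
\end{itemize}
Since $\varphi(k)/\log k\to\infty$, we have $\sum_{k\le n}\varphi(k)\gg n\log n$. Since $\varphi(n+1)=o\bigl(\sum_{k\le n}\varphi(k)\bigr)$, it follows that
\[
\frac{\log\mu(I_n)}{\log|I_n|}\to1 .
\]

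\emph{Step 3: passing from cylinders to balls (the main obstacle).} A ball $B(x,r)$ with $|I_{n+1}|\le r<|I_n|$ may meet many level-$(n+1)$ subcylinders of a single $I_n$. Those subcylinders sit in $I_n$ in order of $d_{n+1}$, each of length about $|I_n|\,e^{-\varphi(n+1)}$ up to polynomial factors in $n$. I would bound
\[
\mu(B(x,r))\le\min\Bigl\{\mu(I_n),\;\frac{r}{|I_n|\,e^{-\varphi(n+1)}}\,\mu(I_n)\,e^{-\varphi(n+1)}\Bigr\}\cdot n^{O(1)} ,
\]
and then interpolate, using $\min\{a,b\}\le a^{1-s}b^{s}$, to get $\mu(B(x,r))\lesssim r^{1-\varepsilon}$ for $n$ large. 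The only costs are the polynomial factors and the $\varphi(n+1)$ term, and both are negligible against $\sum_{k\le n}\varphi(k)$ by the two hypotheses on $\varphi$. The mass distribution principle then gives $\hdim K\ge1-\varepsilon$ for every $\varepsilon>0$, hence $\hdim F(\varphi)=1$.

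The delicate point is precisely controlling how $d_{n+1}$-subcylinders of varying sizes pack inside $I_n$. The hypothesis $\varphi(n+1)/\sum_{k\le n}\varphi(k)\to0$ is exactly what prevents a large jump at the next level from causing a dimension drop.
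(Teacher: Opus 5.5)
The paper does not prove this statement; it is quoted from \cite[Theorem 1.1]{Ahn24}. Your route (digits confined to the windows $D_n$, the uniform product measure, and the mass distribution principle) is a sound, self-contained way to prove it. Steps 1 and 2 are correct, provided that from Step 2 on $|I_n|$ stands for $\ell_n\asymp(d_1\dotsm d_n)^{-1}n^{-2}e^{-\varphi(n+1)}$. This is the length of the interval $J_n$ formed by the level-$(n+1)$ subcylinders of $I(d_1,\dots,d_n)$ with $d_{n+1}\in D_{n+1}$. With the true Pierce length $(d_1\dotsm d_n(d_n+1))^{-1}$ instead, your Step 3 claim about the size of the subcylinders would be off by a factor of order $e^{\varphi(n+1)-\varphi(n)}$.

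\textbf{Step 3 needs a repair.} Let $s_{n+1}$ be the smallest of those subcylinder lengths; they are all comparable, and $s_{n+1}\asymp\ell_n e^{-\varphi(n+1)}$. Your displayed bound counts the subcylinders met by $B(x,r)$ as $\lesssim r/s_{n+1}$, dropping the additive $O(1)$. It is therefore false for $\ell_{n+1}\le r<s_{n+1}$. For $r=\ell_{n+1}$ and $x\in K\cap J_{n+1}$, the ball contains $J_{n+1}$. Your bound, however, is smaller than $\mu(J_{n+1})$ by a factor of about $n^{-1}e^{-(\varphi(n+2)-\varphi(n+1))}$. That factor cannot be absorbed into $n^{O(1)}$ when $\varphi$ has large jumps, and the hypotheses allow this: the functions built in the proof of Theorem~\ref{difference theorem} jump by an amount of order $n$.

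Split into two cases instead:
\begin{itemize}
\item If $s_{n+1}\le r<\ell_n$, your bound is valid and gives directly $\mu(B(x,r))\lesssim(r/\ell_n)\mu(J_n)\le r\ell_n^{-\varepsilon}\le r^{1-\varepsilon}$. No interpolation is needed.
\item If $\ell_{n+1}\le r<s_{n+1}$, the ball meets only boundedly many level-$(n+1)$ subcylinders. So $\mu(B(x,r))\lesssim\mu(J_{n+1})\le\ell_{n+1}^{1-\varepsilon}\le r^{1-\varepsilon}$ by Step 2 at level $n+1$.
\end{itemize}

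You should also justify that a ball with $r<\ell_n$ sees only one level-$n$ piece. $J_n$ corresponds to small $T^nx$, so it sits at the end of $I(d_1,\dots,d_n)$ adjacent to $I(d_1,\dots,d_{n-1},d_n-1)$. The piece of that neighbouring cylinder lies at its far end. Hence consecutive level-$n$ pieces are separated by almost a full Pierce cylinder, which is longer than $\ell_n$ by a factor at least of order $n$. With these additions the argument is complete.

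Alternatively, you can feed $m_n=\#D_n$ and gaps of order $\min(d_1\dotsm d_n\,d_n)^{-1}$ (over admissible digits) into the standard gap lemma (Falconer, \emph{Fractal Geometry}, Example~4.6). That packages this case analysis and gives the lower bound $\liminf_n\frac{\sum_{k<n}\varphi(k)+O(n)}{\sum_{k\le n}\varphi(k)+O(n\log n)}=1$ at once.
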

\begin{proposition}[{\cite[Theorem 1.4]{Ahn25}}] \label{old S theorem}
For each $\alpha\in[0,\infty]$, the set $S(\alpha)$ is dense in $[0,1]$. Moreover,
\[
\hdim(U\cap S(\alpha))=1
\]
for every non-empty open subset $U$ of $[0,1]$.
\end{proposition}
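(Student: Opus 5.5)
The plan is to reduce everything to the inclusion $A(\alpha)\subseteq S(\alpha)$ and to a \emph{localized} full-dimension statement for $A(\alpha)$. I will take $A(\alpha)\subseteq S(\alpha)$ as given from \cite{Ahn25}, as the introduction records; note that its proof in \cite{Ahn25} combines Proposition \ref{old product growth proposition} with Shallit's estimates. Density is then automatic: if every non-empty open $U$ satisfies $\hdim(U\cap S(\alpha))=1$, then $U\cap S(\alpha)\neq\emptyset$. So the whole task is to show $\hdim(U\cap A(\alpha))=1$ for every non-empty open $U\subseteq[0,1]$.

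\textbf{Step 1 (a model set inside $A(\alpha)$).} I apply Proposition \ref{Ahn24 full dimension proposition} with the following choices:
\begin{itemize}
\item $\varphi(n)=\alpha n$ if $\alpha\in(0,\infty)$;
\item $\varphi(n)=(\log(n+1))^2$ if $\alpha=0$;
\item $\varphi(n)=n\log(n+1)$ if $\alpha=\infty$.
\end{itemize}
Each choice is non-decreasing with $\varphi(n)/\log n\to\infty$. Moreover $\varphi(n+1)/\sum_{k\le n}\varphi(k)\to0$, since the denominator is of order $n\varphi(n)$. Each choice also satisfies $\varphi(n)/n\to\alpha$. Hence $F(\varphi)\subseteq A(\alpha)$ and $\hdim F(\varphi)=1$. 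All three choices further satisfy $\varphi(n+k)/\varphi(n)\to1$ for each fixed $k$. This gives the shift invariance I need: if $x\in F(\varphi)$, then $\log d_n(T^kx)/n=\log d_{n+k}(x)/n\to\alpha$, so $T^k(F(\varphi)\cap\mathbb I)\subseteq A(\alpha)$.

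\textbf{Step 2 (pushing into a small neighbourhood of $0$).} Fix $M\in\mathbb N$ and take $k\ge M$. Let $I(\delta_1,\dots,\delta_k)$ denote the level-$k$ cylinder of points whose first $k$ digits are $\delta_1,\dots,\delta_k$. There are countably many such cylinders, so countable stability gives
\[
\sup_{(\delta_1,\dots,\delta_k)}\hdim\bigl(F(\varphi)\cap I(\delta_1,\dots,\delta_k)\bigr)=1.
\]
On each such cylinder we have $x=p_k+(-1)^kT^k(x)/(\delta_1\dotsm\delta_k)$, so $T^k$ is affine there and preserves Hausdorff dimension. Every image point $y$ satisfies $d_1(y)=d_{k+1}(x)\ge k+1>M$. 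Consequently
\[
\hdim\bigl(A(\alpha)\cap\{y\in\mathbb I:d_1(y)>M\}\bigr)=1 .
\]

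\textbf{Step 3 (prepending a prefix).} A non-empty open $U$ contains a cylinder $I(e_1,\dots,e_m)$ of some irrational point. Apply Step 2 with $M=e_m$. For $y$ with $d_1(y)>e_m$, the digit string $(e_1,\dots,e_m,d_1(y),d_2(y),\dots)$ is strictly increasing, so it is the digit sequence of a unique point $x_y$. The map $y\mapsto x_y$ is the inverse affine map $y\mapsto p_m+(-1)^my/(e_1\dotsm e_m)$. It preserves dimension, and since prepending finitely many digits does not change $\lim\log d_n/n$, it maps $A(\alpha)$ into $A(\alpha)$. Hence $\hdim(U\cap A(\alpha))=1$, and therefore $\hdim(U\cap S(\alpha))=1$.

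The main obstacle is the localization in Step 2. The global statement $\hdim F(\varphi)=1$ does not by itself say anything about small intervals, and $F(\varphi)$ is not obviously invariant under prepending digits. What makes the argument work is the combination of countable stability over level-$k$ cylinders with the affine nature of $T^k$ on each cylinder. That in turn depends on choosing $\varphi$ asymptotically shift-invariant, so that $T^k$ maps $F(\varphi)$ back into $A(\alpha)$ rather than into some other level set.
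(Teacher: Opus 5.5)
Your argument is correct and follows the route the paper attributes to \cite{Ahn25}. You take a full-dimensional subset of $A(\alpha)$ from Proposition \ref{Ahn24 full dimension proposition}, use the inclusion $A(\alpha)\subseteq S(\alpha)$, and localize to an arbitrary open set via the invariance of $A(\alpha)$ under finite replacement of digits, which your composite map (apply $T^k$, then prepend $e_1,\dots,e_m$) realizes concretely. The emphasis on $\varphi(n+k)/\varphi(n)\to1$ is superfluous: $A(\alpha)$ is itself invariant under $T^k$ and under prepending digits, so all you actually use is $F(\varphi)\subseteq A(\alpha)$.
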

\section{Auxiliary results} \label{Auxiliary results}
In this section, we establish some auxiliary results which will be used in the proof of Theorem \ref{main theorem}.\par
The following lemma expresses the numerator of $Q_x(N)$ in terms of fractional parts and gives an upper bound for it which holds for all $N$.
\begin{lemma} \label{global bound lemma}
Let $x\in[0,1]$ and $N\in\mathbb N$. Then
\begin{align} \label{fractional formula}
Nx-L((d_k)_{k\in\mathbb N},N)
=\sum_{n\in\mathbb N}(-1)^{n+1}
\left\{\frac{N}{d_1\dotsm d_n}\right\}.
\end{align}
Moreover, if $r\in\mathbb N$ satisfies $d_1\dotsm d_r>N$, then
\begin{align*}
\left|Nx-L((d_k)_{k\in\mathbb N},N)\right|\leq \frac r2+1.
\end{align*}
\end{lemma}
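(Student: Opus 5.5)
The plan is to write $q_n\coloneqq d_1\dotsm d_n$, with the convention $q_n=\infty$ as soon as some digit is infinite. Subtracting Shallit's formula \eqref{L formula} termwise from the Pierce expansion \eqref{Pierce expansion} will give \eqref{fractional formula}. Splitting the resulting alternating series at index $r$ will give the bound.

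\emph{Step 1 (the identity).} Multiplying \eqref{Pierce expansion} by $N$ gives $Nx=\sum_{n\in\mathbb N}(-1)^{n+1}N/q_n$. This series converges absolutely, since $q_n\geq n!$ by \eqref{digit lower bound}; terms with $q_n=\infty$ are $0$. On the other side, $L((d_k)_{k\in\mathbb N},N)=\sum_{n}(-1)^{n+1}\lfloor N/q_n\rfloor$. Here terms with an infinite digit are $0$ by the convention in the introduction, and $\lfloor N/q_n\rfloor=0$ once $q_n>N$, so this is effectively a finite sum. Subtracting the two convergent series termwise and using $y-\lfloor y\rfloor=\{y\}$ gives \eqref{fractional formula}; terms with $q_n=\infty$ contribute $\{0\}=0$. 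The case $x=0$ is trivial, since all terms vanish.

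\emph{Step 2 (the tail $n\geq r$).} Since $d_{n+1}\geq2$, the sequence $(q_n)$ is non-decreasing, and it is strictly increasing while the digits are finite. Hence, for $n\geq r$, we have $q_n\geq q_r>N$, so $\{N/q_n\}=N/q_n$. The tail
\[
\sum_{n\geq r}(-1)^{n+1}\frac{N}{q_n}
\]
is therefore an alternating series whose terms decrease to $0$ (eventually equal to $0$ in the rational case). By the alternating series bound, its absolute value is at most $N/q_r<1$.

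\emph{Step 3 (the head $n\leq r-1$).} The remaining $r-1$ terms have the form $(-1)^{n+1}a_n$ with $a_n\in[0,1)$. Group them in consecutive pairs $a_{2j-1}-a_{2j}$; each pair has absolute value less than $1$. If $r-1$ is odd, there is at most one unpaired term, and it is also less than $1$ in absolute value. So the head has absolute value at most $\lceil (r-1)/2\rceil\leq r/2$. Combining this with Step 2 gives the stated bound $r/2+1$.

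I expect no serious obstacle. The only point needing care is the bookkeeping of infinite digits for rational $x$, together with justifying termwise subtraction of the series, which is routine since one of the two series is a finite sum.
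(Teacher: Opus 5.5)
Your proposal is correct and follows essentially the same route as the paper. You obtain \eqref{fractional formula} by subtracting \eqref{L formula} from $N$ times \eqref{Pierce expansion}, then split the series at index $r$: the head is bounded by $\lceil (r-1)/2\rceil\leq r/2$, and the alternating tail by $N/(d_1\dotsm d_r)<1$. Your explicit pairing of head terms and your bookkeeping of infinite digits simply spell out details the paper leaves implicit.
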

\begin{proof}
Subtracting \eqref{L formula}, with $\sigma_k=d_k$, from $N$ times \eqref{Pierce expansion}, we obtain \eqref{fractional formula}. Note that the series in \eqref{fractional formula} is well-defined. Indeed, if $x$ is rational, then all but finitely many terms vanish; if $x$ is irrational, then $d_1\dotsm d_n\geq n!$ by \eqref{digit lower bound}.\par
Now, suppose that $d_1\dotsm d_r>N$. Then
\[
\left\{\frac{N}{d_1\dotsm d_n}\right\}
=\frac{N}{d_1\dotsm d_n}
\]
for all $n\geq r$, and so
\begin{align*}
Nx-L((d_k)_{k\in\mathbb N},N)
=\sum_{n=1}^{r-1}(-1)^{n+1}\left\{\frac{N}{d_1\dotsm d_n}\right\}+N\sum_{n=r}^\infty\frac{(-1)^{n+1}}{d_1\dotsm d_n}.
\end{align*}
Since each fractional part lies in $[0,1)$, the absolute value of the first sum is at most $\lceil(r-1)/2\rceil\leq r/2$. The second sum is an alternating series whose terms decrease in absolute value, and hence its absolute value is less than $1$. This proves the inequality.
\end{proof}
The next lemma shows that the lower limit in \eqref{definition gamma} is also the lower limit along the even indices and along the odd indices.
\begin{lemma} \label{parity lemma}
Let $x\in\mathbb I$. Then
\[
\liminf_{r\to\infty}
\frac{2\sum_{k=1}^{2r}\log d_k}{(2r)^2}
=
\liminf_{r\to\infty}
\frac{2\sum_{k=1}^{2r+1}\log d_k}{(2r+1)^2}
=\gamma(x).
\]
\end{lemma}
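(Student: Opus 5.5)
Write $a_n\coloneqq 2\sum_{k=1}^n\log d_k/n^2$, so that $\gamma(x)=\liminf_{n\to\infty}a_n$. Since $x\in\mathbb I$, every $d_k$ is finite and $a_n$ is a finite nonnegative real number. Both subsequential lower limits are at least $\gamma(x)$, because they are taken along subsequences. Moreover, $\liminf a_n$ equals the minimum of the two parity liminfs, since $\mathbb N$ is the union of the evens and the odds. So it suffices to show the two parity liminfs agree. The plan is to compare $a_n$ with its neighbour $a_{n+1}$ and show that neither is much smaller than the other along any index set realizing a liminf.

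First I would get the easy direction from monotonicity of the partial sums $P_n\coloneqq\sum_{k\le n}\log d_k$. Since $\log d_{n+1}\ge0$, we have $P_{n+1}\ge P_n$, hence
\[
a_{n+1}\ge a_n\frac{n^2}{(n+1)^2}.
\]
Apply this along a sequence $n_j$ realizing $\liminf a_n$, say with $n_j$ even. Then $a_{n_j+1}$ is an odd-indexed term, and its liminf is at most $\gamma(x)$. This gives nothing new, because the inequality goes the wrong way, so I need the reverse inequality $a_{n-1}\lesssim a_n$. That is, I need $P_{n-1}\le P_n$ again, now used to bound $a_{n-1}$ from above:
\[
a_{n-1}=\frac{2P_{n-1}}{(n-1)^2}\le\frac{2P_n}{(n-1)^2}=a_n\frac{n^2}{(n-1)^2}.
\]
Monotonicity thus yields bounds in both directions: each of $a_{n\pm1}$ is at most $a_n(1+O(1/n))$. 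More precisely, $a_{n-1}\le a_n\,n^2/(n-1)^2$ holds for every $n$.

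Now suppose $\gamma(x)<\infty$. Take $n_j$ with $a_{n_j}\to\gamma(x)$. Passing to a subsequence, all $n_j$ have the same parity. Then $n_j-1$ have the other parity, and
\[
\limsup_{j\to\infty} a_{n_j-1}\le\gamma(x)\lim_{j\to\infty}\frac{n_j^2}{(n_j-1)^2}=\gamma(x).
\]
Hence the other parity's liminf is at most $\gamma(x)$. The same parity's liminf is trivially at most $\gamma(x)$, so both equal $\gamma(x)$ by the lower bound noted at the start. If $\gamma(x)=\infty$, then $a_n\to\infty$ along all $n$, and every subsequence also tends to $\infty$, so there is nothing to prove.

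The only slightly delicate point is remembering to use $P_{n-1}\le P_n$, stepping down an index, rather than the forward step. The forward comparison only bounds $a_{n+1}$ from below and is useless here. Also, $d_k\ge k\ge1$ from \eqref{digit lower bound} guarantees $\log d_k\ge0$, which is what makes $P_n$ non-decreasing.
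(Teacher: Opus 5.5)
Your proof is correct. It rests on the same fact as the paper's proof: since $P_n$ is non-decreasing, $a_{n}\ge a_{n-1}(n-1)^2/n^2$, with a factor tending to $1$. The paper applies this with $n=2r+1$ and with $n=2r$ to show directly that the two parity lower limits bound each other, then notes that $\gamma(x)$ is their minimum; you instead shift a $\gamma$-realizing subsequence down by one index. Your inequality $a_{n-1}\le a_n n^2/(n-1)^2$ is just the forward comparison re-indexed, so the forward step is not actually useless; only your first way of applying it was.
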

\begin{proof}
Since $\sum_{k=1}^n\log d_k$ is increasing in $n$, we have
\[
\frac{2\sum_{k=1}^{2r+1}\log d_k}{(2r+1)^2}
\geq
\left(\frac{2r}{2r+1}\right)^2
\frac{2\sum_{k=1}^{2r}\log d_k}{(2r)^2}.
\]
Hence the lower limit along the odd indices is at least the lower limit along the even indices. Similarly,
\[
\frac{2\sum_{k=1}^{2r}\log d_k}{(2r)^2}
\geq
\left(\frac{2r-1}{2r}\right)^2
\frac{2\sum_{k=1}^{2r-1}\log d_k}{(2r-1)^2},
\]
which gives the reverse inequality. Thus the two lower limits are equal. But the lower limit of the full sequence is the minimum of these two lower limits, and so both of them are equal to $\gamma(x)$ by \eqref{definition gamma}.
\end{proof}
We now recall the integers used by Shallit in \cite[Theorem 2]{Sha94}. For $x\in\mathbb I$ and $j\in\mathbb N$, put
\begin{align} \label{N j definition}
N_j(x)\coloneqq-1+d_1(x)-d_1(x)d_2(x)+\dotsb+(-1)^{j+1}d_1(x)\dotsm d_j(x),
\end{align}
and put $N_0(x)\coloneqq-1$. Then $N_{2r+1}(x)>0$ for each $r\in\mathbb N$. For each integer $r\geq2$, put
\[
M_{2r}(x)\coloneqq-N_{2r}(x)>0.
\]
When $x$ is fixed, we write $N_j$ for $N_j(x)$ and $M_{2r}$ for $M_{2r}(x)$. For any $x\in\mathbb I$, we have
\begin{align} \label{N M less D}
0<N_{2r+1}<d_1\dotsm d_{2r+1}
\quad\text{and}\quad
0<M_{2r}<d_1\dotsm d_{2r}\quad(r\geq2).
\end{align}
These inequalities follow by induction from \eqref{N j definition} and the strict monotonicity of the digits. Moreover, for each $r\in\mathbb N$,
\[
N_{2r+1}=N_{2r-1}+d_1\dotsm d_{2r}(d_{2r+1}-1)>N_{2r-1},
\]
and, for each integer $r\geq3$,
\[
M_{2r}=M_{2r-2}+d_1\dotsm d_{2r-1}(d_{2r}-1)>M_{2r-2}.
\]
Hence $N_{2r+1}\to\infty$ and $M_{2r}\to\infty$ as $r\to\infty$.\par
The lower bounds in the following lemma follow from \cite[Theorem 2]{Sha94}. We also need the upper bounds, which we prove in a similar manner.
\begin{lemma} \label{canonical excursion lemma}
Let $x\in\mathbb I$. Then the following hold.
\begin{enumerate}[label=\upshape(\roman*), ref=(\roman*), leftmargin=*, widest=ii]
\item \label{canonical excursion lemma 1}
If $\sum_{n\in\mathbb N}1/d_n<\infty$, then, for each integer $r\geq2$,
\begin{align} \label{canonical summable positive}
N_{2r+1}x-L((d_k)_{k\in\mathbb N},N_{2r+1})
\geq r+1-\sum_{n\in\mathbb N}\frac1{d_n}
\end{align}
and
\begin{align} \label{canonical summable negative}
M_{2r}x-L((d_k)_{k\in\mathbb N},M_{2r})
\leq-r+2+\sum_{n\in\mathbb N}\frac1{d_n}.
\end{align}
\item \label{canonical excursion lemma 2}
For each integer $r\geq2$,
\begin{align} \label{canonical crude positive}
N_{2r+1}x-L((d_k)_{k\in\mathbb N},N_{2r+1})\geq\frac r2
\end{align}
and
\begin{align} \label{canonical crude negative}
M_{2r}x-L((d_k)_{k\in\mathbb N},M_{2r})
\leq-\frac r2+\frac32+\frac1{2r+1}.
\end{align}
\end{enumerate}
\end{lemma}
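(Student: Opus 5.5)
The plan is to evaluate each fractional part in \eqref{fractional formula} exactly at $N=N_{2r+1}$ and at $N=M_{2r}$. Write $D_n\coloneqq d_1\dotsm d_n$ and $D_0\coloneqq1$. Since $D_n$ divides $D_k$ for $k\geq n$, the terms $\pm D_k$ with $k\geq n$ in \eqref{N j definition} are integer multiples of $D_n$. Hence, for $1\le n\le 2r+1$,
\[
\left\{\frac{N_{2r+1}}{D_n}\right\}=\left\{\frac{N_{n-1}}{D_n}\right\},
\qquad
\left\{\frac{M_{2r}}{D_n}\right\}=\left\{\frac{-N_{n-1}}{D_n}\right\}\quad(n\le 2r).
\]
The partial sums $N_{n-1}$ alternate in sign according to the parity of $n$, and by \eqref{N M less D} their absolute values are smaller than $D_{n-1}$. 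Therefore every such fractional part is either a small quantity $|N_{n-1}|/D_n<1/d_n$ or its complement $1-|N_{n-1}|/D_n>1-1/d_n$.

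\textbf{Positive excursion at $N_{2r+1}$.} For odd $n$ we have $N_{n-1}<0$; this includes $n=1$, where $N_0=-1$. So the fractional part is $1-M_{n-1}/D_n\geq1-1/d_n$, and it enters \eqref{fractional formula} with sign $+$. For even $n$ we have $N_{n-1}>0$, so the fractional part is $N_{n-1}/D_n<1/d_n$, and it enters with sign $-$. For $n\geq 2r+2$ we have $N_{2r+1}<D_n$, so the terms are $N_{2r+1}/D_n$ and form an alternating tail that starts with a negative term. By the alternating-series bound used in Lemma \ref{global bound lemma}, this tail is at least $-N_{2r+1}/D_{2r+2}>-1/d_{2r+2}$. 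There are $r+1$ odd indices in $\{1,\dots,2r+1\}$, so summing gives
\[
N_{2r+1}x-L\geq (r+1)-\sum_{n\le 2r+2}\frac1{d_n},
\]
which is \eqref{canonical summable positive}.

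\textbf{Negative excursion at $M_{2r}$.} The roles of the parities swap. For even $n\le 2r$ the fractional part is $1-N_{n-1}/D_n>1-1/d_n$ with sign $-$. For odd $n$ it equals $M_{n-1}/D_n<1/d_n$ with sign $+$. For $n\geq 2r+1$ we use the tail as before, which now starts with a positive term and is at most $1/d_{2r+1}$. The indices $n=1$ and $n=3$ need separate handling, because $M_2$ is not covered by \eqref{N M less D} (which requires $r\geq2$): there $\{1/d_1\}$ and $\{M_2/D_3\}$ are simply bounded by $1$, and this is the source of the extra constant. The $r$ even indices then yield \eqref{canonical summable negative}, with the constant $2$ absorbing the $n=1,3$ terms.

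\textbf{Crude bounds in \ref{canonical excursion lemma 2}.} The same decompositions apply with no summability assumption, and the only input is $d_n\geq n$ together with strict monotonicity. I would group the terms into consecutive pairs: $(2k,2k+1)$ in the positive case, with the $n=1$ term kept separately, and $(2k-1,2k)$ in the negative case. The goal is to show each pair contributes at least $1/2$ in the positive case and at most $-1/2$ in the negative case. The first way I would try is to use the identities $N_{2k+1}=N_{2k-1}+D_{2k}(d_{2k+1}-1)$ and the analogous one for $M$, so that the two small quantities $N_{2k-1}/D_{2k}$ and $M_{2k}/D_{2k+1}$ can be compared directly rather than bounded separately by $1/d_n$. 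If that fails, I would use $d_n\geq n$ to bound the total loss per pair by $1/2$ from index $4$ onward, and treat the first few indices by hand; the constants $3/2$ and $1/(2r+1)$ in \eqref{canonical crude negative} suggest this is how the boundary terms and the tail bound $1/d_{2r+1}\le1/(2r+1)$ enter. This pairing estimate is the main obstacle, since a naive use of $1/d_{2k}+1/d_{2k+1}$ with $d_n\ge n$ is too weak for small $k$ and the slack must come from the exact structure of $N_j$ and $M_j$.
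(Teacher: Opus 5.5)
Your part (i) is correct. Evaluating each fractional part term by term re-derives what the paper gets from Shallit's Theorem 2 combined with $(1-a)(1-b)\geq 1-a-b$. There is one slip. If $d_1=1$, then $N_1=0$ and $\{M_{2r}/D_2\}=0$, not $1-N_1/D_2$. The final bound survives, since the $n=1$ term then vanishes as well. It is cleaner, though, to bound the first two terms jointly by $1$, as the paper does.

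The genuine gap is part (ii), which you leave open. It is exactly what the main theorem needs when $\gamma(x)=0$, where $\sum_n 1/d_n$ may diverge. The missing idea is the one-step relation $N_j=N_{j-1}+(-1)^{j+1}D_j$, not the two-step identity you quote. This relation makes each pair of consecutive terms with indices $2j-1,2j$ factor exactly. At $N=N_{2r+1}$, put $M_0\coloneqq1$ and use $N_{2j-1}=D_{2j-1}-M_{2j-2}$. Then for $1\leq j\leq r+1$ the pair equals
\[
\Bigl(1-\frac{M_{2j-2}}{D_{2j-1}}\Bigr)-\frac{N_{2j-1}}{D_{2j}}
=\Bigl(1-\frac1{d_{2j}}\Bigr)\Bigl(1-\frac{M_{2j-2}}{D_{2j-1}}\Bigr)
\geq\Bigl(1-\frac1{d_{2j-1}}\Bigr)\Bigl(1-\frac1{d_{2j}}\Bigr).
\]
At $N=M_{2r}$, the same algebra gives $-(1-1/d_{2j})(1-M_{2j-2}/D_{2j-1})\leq-(1-1/d_{2j-1})(1-1/d_{2j})$ for $2\leq j\leq r$.

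For $j\geq2$, the bound $d_n\geq n$ makes these products at least $\frac23\cdot\frac34=\frac12$. Bounding the two terms separately instead loses $1/d_3+1/d_4$, which can equal $7/12>1/2$. This multiplicative saving is the slack you were looking for.

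\textbf{Negative case.} Your pairing $(2k-1,2k)$ is the right one. With the identity it yields $1-(r-1)/2+1/d_{2r+1}$, which gives \eqref{canonical crude negative}.

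\textbf{Positive case.} Here the pairing must be $(2j-1,2j)$ with $j$ running up to $r+1$. That way the negative first tail term $-N_{2r+1}/D_{2r+2}$ is absorbed into the last pair, and the remaining tail, starting at index $2r+3$, is positive. With your pairing $(2k,2k+1)$, $1\leq k\leq r$, that negative term stays unpaired. Even your stated target, each pair contributing at least $1/2$, then only gives $(1-1/d_1)+r/2-1/d_{2r+2}$. This falls below $r/2$ when $d_1=1$.
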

\begin{proof}
By \cite[Theorem 2]{Sha94}, we have
\begin{align} \label{positive Shallit estimate}
N_{2r+1}x-L((d_k)_{k\in\mathbb N},N_{2r+1}) \geq
\sum_{j=1}^{r+1}
\left(1-\frac1{d_{2j-1}}\right)
\left(1-\frac1{d_{2j}}\right).
\end{align}
We now establish an analogous upper bound for $M_{2r}$. By \eqref{N j definition}, for $1\leq i\leq j$,
\begin{align} \label{residue N}
N_j\equiv N_{i-1}\pmod{d_1\dotsm d_i}.
\end{align}
Let $r\geq2$. The contribution of the first two terms in \eqref{fractional formula}, with $N=M_{2r}$, is at most $1$. For $2\leq j\leq r$, it follows from \eqref{residue N} and the signs of $N_{2j-2}$ and $N_{2j-1}$ that
\[
\left\{\frac{M_{2r}}{d_1\dotsm d_{2j-1}}\right\}
=-\frac{N_{2j-2}}{d_1\dotsm d_{2j-1}}
\quad\text{and}\quad
\left\{\frac{M_{2r}}{d_1\dotsm d_{2j}}\right\}
=1-\frac{N_{2j-1}}{d_1\dotsm d_{2j}}.
\]
Since
\[
N_{2j-1}=N_{2j-2}+d_1\dotsm d_{2j-1},
\]
we find that
\begin{align*}
\left\{\frac{M_{2r}}{d_1\dotsm d_{2j-1}}\right\}
-\left\{\frac{M_{2r}}{d_1\dotsm d_{2j}}\right\}
=-\left(1-\frac1{d_{2j}}\right)
\left(1+\frac{N_{2j-2}}{d_1\dotsm d_{2j-1}}\right).
\end{align*}
But $N_{2j-2}\geq-d_1\dotsm d_{2j-2}$, and hence
\begin{align*}
\left\{\frac{M_{2r}}{d_1\dotsm d_{2j-1}}\right\}
-\left\{\frac{M_{2r}}{d_1\dotsm d_{2j}}\right\}
\leq-
\left(1-\frac1{d_{2j-1}}\right)
\left(1-\frac1{d_{2j}}\right).
\end{align*}
By \eqref{N M less D}, the alternating tail in \eqref{fractional formula} starting from the term with index $2r+1$ is positive and less than $1/d_{2r+1}$. Therefore,
\begin{align} \label{negative canonical estimate}
M_{2r}x-L((d_k)_{k\in\mathbb N},M_{2r})
\leq
1-\sum_{j=2}^{r}
\left(1-\frac1{d_{2j-1}}\right)
\left(1-\frac1{d_{2j}}\right)
+\frac1{d_{2r+1}}.
\end{align}
(i) Suppose that $\sum_{n\in\mathbb N}1/d_n<\infty$. Since
\[
\left(1-\frac1{d_{2j-1}}\right)
\left(1-\frac1{d_{2j}}\right)
\geq1-\frac1{d_{2j-1}}-\frac1{d_{2j}}
\]
for each $j\in\mathbb N$, it follows from \eqref{positive Shallit estimate} that
\[
N_{2r+1}x-L((d_k)_{k\in\mathbb N},N_{2r+1})
\geq r+1-\sum_{n\in\mathbb N}\frac1{d_n},
\]
which is \eqref{canonical summable positive}. Similarly, \eqref{negative canonical estimate} gives
\begin{align*}
M_{2r}x-L((d_k)_{k\in\mathbb N},M_{2r})
\leq-r+2+
\sum_{n=3}^{2r}\frac1{d_n}+\frac1{d_{2r+1}}
\leq-r+2+\sum_{n\in\mathbb N}\frac1{d_n},
\end{align*}
which proves \eqref{canonical summable negative}.\par
(ii) By \eqref{digit lower bound}, for each integer $j\geq2$,
\[
\left(1-\frac1{d_{2j-1}}\right)
\left(1-\frac1{d_{2j}}\right)
\geq\left(1-\frac13\right)\left(1-\frac14\right)=\frac12.
\]
Applying this to the terms with $2\leq j\leq r+1$ in \eqref{positive Shallit estimate}, we obtain \eqref{canonical crude positive}. Also, by \eqref{negative canonical estimate} and $d_{2r+1}\geq2r+1$, we have
\[
M_{2r}x-L((d_k)_{k\in\mathbb N},M_{2r})
\leq1-\frac{r-1}{2}+\frac1{2r+1},
\]
which is \eqref{canonical crude negative}. This completes the proof of the lemma.
\end{proof}
\section{Proof of the main theorem} \label{Proof main theorem}
\begin{proof}[Proof of Theorem \ref{main theorem}]
We first consider the case where $x$ is rational. Then $d_r=\infty$ for some $r\in\mathbb N$, and so Lemma \ref{global bound lemma} gives
\[
\left|Nx-L((d_k)_{k\in\mathbb N},N)\right|\leq\frac r2+1
\]
for all $N\in\mathbb N$. Hence both limits in \eqref{main formula} are $0$. Since $\gamma(x)=\infty$ by \eqref{definition gamma}, this proves \eqref{main formula} for rational $x$.\par
We may therefore assume that $x\in\mathbb I$. Put $\gamma\coloneqq\gamma(x)$. We consider three cases.\par
\emph{Case I}: $0<\gamma<\infty$. Let $\varepsilon\in(0,\gamma)$ and $\delta>0$ be arbitrary. By the definition of $\gamma$, there exists $n_0\in\mathbb N$ such that
\begin{align} \label{sum lower finite gamma}
\sum_{k=1}^n\log d_k\geq\frac{\gamma-\varepsilon}{2}n^2
\end{align}
for all $n\geq n_0$. For sufficiently large $N$, put
\[
r(N)\coloneqq\left\lceil(1+\delta)\sqrt{\frac{2\log N}{\gamma-\varepsilon}}\right\rceil.
\]
Then \eqref{sum lower finite gamma} gives
\[
\log(d_1\dotsm d_{r(N)})>\log N,
\]
i.e., $d_1\dotsm d_{r(N)}>N$. By Lemma \ref{global bound lemma}, we have
\[
|Q_x(N)|
\leq\frac{r(N)/2+1}{\sqrt{\log N}}.
\]
By letting $N\to\infty$, then $\delta\to0^+$, and finally $\varepsilon\to0^+$, we obtain
\begin{align} \label{global squeeze finite gamma}
-\frac1{\sqrt{2\gamma}}
\leq\liminf_{N\to\infty}Q_x(N)
\leq\limsup_{N\to\infty}Q_x(N)
\leq\frac1{\sqrt{2\gamma}}.
\end{align}
Moreover, since
\[
\sum_{k=1}^n\log d_k\leq n\log d_n
\]
for each $n\in\mathbb N$, it follows from \eqref{sum lower finite gamma} that $\log d_n\geq(\gamma-\varepsilon)n/2$ for all sufficiently large $n$. Hence
\[
C\coloneqq\sum_{n\in\mathbb N}\frac1{d_n}<\infty.
\]
By Lemma \ref{parity lemma}, there is a sequence $(r_j)_{j\in\mathbb N}$ of positive integers with $r_j\to\infty$ such that
\[
\frac{2\sum_{k=1}^{2r_j+1}\log d_k}{(2r_j+1)^2}\to\gamma
\quad\text{as } j\to\infty.
\]
Note that $N_{2r_j+1}\to\infty$ as $j\to\infty$. For all sufficiently large $j$, we have $r_j+1-C>0$, while $N_{2r_j+1}<d_1\dotsm d_{2r_j+1}$ by \eqref{N M less D}, so that
\[
\log N_{2r_j+1}<\sum_{k=1}^{2r_j+1}\log d_k.
\]
Hence, by Lemma \ref{canonical excursion lemma}\ref{canonical excursion lemma 1}, we find that
\begin{align*}
\limsup_{N\to\infty}Q_x(N)
\geq\limsup_{j\to\infty}
Q_x(N_{2r_j+1})
\geq\lim_{j\to\infty}
\frac{r_j+1-C}{\sqrt{\sum_{k=1}^{2r_j+1}\log d_k}}
=\frac1{\sqrt{2\gamma}}.
\end{align*}
Together with \eqref{global squeeze finite gamma}, this proves the first equality in \eqref{main formula}.\par
Similarly, by Lemma \ref{parity lemma}, there is a sequence $(s_j)_{j\in\mathbb N}$ of positive integers with $s_j\to\infty$ such that
\begin{align} \label{even gamma subsequence}
\frac{2\sum_{k=1}^{2s_j}\log d_k}{(2s_j)^2}\to\gamma
\quad\text{as } j\to\infty.
\end{align}
Note that $M_{2s_j}\to\infty$ as $j\to\infty$. For all sufficiently large $j$, we have $-s_j+2+C<0$, while $M_{2s_j}<d_1\dotsm d_{2s_j}$ by \eqref{N M less D}, so that
\[
\log M_{2s_j}<\sum_{k=1}^{2s_j}\log d_k.
\]
Therefore, by Lemma \ref{canonical excursion lemma}\ref{canonical excursion lemma 1},
\[
Q_x(M_{2s_j})
\leq
\frac{-s_j+2+C}{\sqrt{\sum_{k=1}^{2s_j}\log d_k}}
\]
for all sufficiently large $j$. Using \eqref{even gamma subsequence}, we obtain
\[
\liminf_{N\to\infty}Q_x(N)
\leq-\frac1{\sqrt{2\gamma}}.
\]
Together with \eqref{global squeeze finite gamma}, this proves the second equality in \eqref{main formula}.\par
\emph{Case II}: $\gamma=0$. By Lemma \ref{parity lemma}, there are sequences $(r_j)_{j\in\mathbb N}$ and $(s_j)_{j\in\mathbb N}$ of positive integers with $r_j\to\infty$ and $s_j\to\infty$ such that
\[
\frac{\sum_{k=1}^{2r_j+1}\log d_k}{(2r_j+1)^2}\to0
\quad\text{and}\quad
\frac{\sum_{k=1}^{2s_j}\log d_k}{(2s_j)^2}\to0
\]
as $j\to\infty$. Since $N_{2r_j+1}\to\infty$ as $j\to\infty$ and $N_{2r_j+1}<d_1\dotsm d_{2r_j+1}$, Lemma \ref{canonical excursion lemma}\ref{canonical excursion lemma 2} gives
\[
Q_x(N_{2r_j+1})
\geq
\frac{r_j/2}{\sqrt{\sum_{k=1}^{2r_j+1}\log d_k}}\to\infty
\quad\text{as } j\to\infty.
\]
Thus the upper limit in \eqref{main formula} is $\infty$. Similarly, since $M_{2s_j}\to\infty$ as $j\to\infty$, $M_{2s_j}<d_1\dotsm d_{2s_j}$, and $-s_j/2+3/2+1/(2s_j+1)<0$ for all sufficiently large $j$, Lemma \ref{canonical excursion lemma}\ref{canonical excursion lemma 2} gives
\[
Q_x(M_{2s_j})
\leq
\frac{-s_j/2+3/2+1/(2s_j+1)}{\sqrt{\sum_{k=1}^{2s_j}\log d_k}}\to-\infty
\quad\text{as } j\to\infty.
\]
Hence the lower limit in \eqref{main formula} is $-\infty$.\par
\emph{Case III}: $\gamma=\infty$. Let $K>0$ and $\delta>0$ be arbitrary. Then
\[
\sum_{k=1}^n\log d_k\geq\frac K2n^2
\]
for all sufficiently large $n$. For sufficiently large $N$, put
\[
r(N)\coloneqq\left\lceil(1+\delta)\sqrt{\frac{2\log N}{K}}\right\rceil.
\]
Then $d_1\dotsm d_{r(N)}>N$, and so Lemma \ref{global bound lemma} gives
\[
\limsup_{N\to\infty}
|Q_x(N)|
\leq\frac{1+\delta}{\sqrt{2K}}.
\]
By letting $\delta\to0^+$ and then $K\to\infty$, we conclude that both limits in \eqref{main formula} are $0$. This completes the proof of the theorem.
\end{proof}
\section{Proofs of the other results} \label{Consequences}
In this section, we prove Corollaries \ref{S alpha characterization} and \ref{pair classification} and Theorem \ref{difference theorem}.
\begin{proof}[Proof of Corollary \ref{S alpha characterization}]
Let $\alpha\in[0,\infty]$. By Theorem \ref{main theorem} and the definition \eqref{definition S alpha}, for each $x\in[0,1]$, we have
\[
x\in S(\alpha)
\quad\Longleftrightarrow\quad
\frac1{\sqrt{2\gamma(x)}}=\frac1{\sqrt{2\alpha}}
\quad\Longleftrightarrow\quad
\gamma(x)=\alpha,
\]
with the conventions adopted above. This proves the desired equality. The statements on the denseness and the Hausdorff dimension follow from Proposition \ref{old S theorem}.
\end{proof}
\begin{proof}[Proof of Corollary \ref{pair classification}]
By Theorem \ref{main theorem}, the upper and lower limits of $Q_x(N)$ are negatives of each other for every $x\in[0,1]$. Hence $E(a,b)$ is empty whenever $a\neq-b$.\par
If $0<u<\infty$, then $u=1/\sqrt{2\alpha}$ with $\alpha\coloneqq1/(2u^2)$, and so $E(u,-u)=S(1/(2u^2))$ by the definition \eqref{definition S alpha}. Similarly, we have $E(0,0)=S(\infty)$ and $E(\infty,-\infty)=S(0)$. In particular, $E(a,b)$ is non-empty whenever $a=-b\in[0,\infty]$. The statements on the denseness and the Hausdorff dimension follow from Corollary \ref{S alpha characterization}.
\end{proof}
Before proving Theorem \ref{difference theorem}, we note that, by Proposition \ref{old product growth proposition} and Corollary \ref{S alpha characterization},
\[
A(\alpha)\subseteq S(\alpha)
\]
for each $\alpha\in[0,\infty]$. This recovers \cite[Lemma 4.1]{Ahn25}.
\begin{proof}[Proof of Theorem \ref{difference theorem}]
We first consider the case $0<\alpha<\infty$. Fix $\beta>\alpha$. Choose a strictly increasing sequence $(m_j)_{j\in\mathbb N}$ of integers with $m_1\geq2$ such that
\begin{align} \label{m growth positive alpha}
m_{j+1}\geq m_j^3
\quad\text{and}\quad
m_{j+1}>4(\beta/\alpha)m_j
\end{align}
for each $j\in\mathbb N$, and define a non-decreasing function $\varphi:\mathbb N\to(0,\infty)$ by
\[
\varphi(n)\coloneqq\max\{\alpha n,\ \beta m_j:m_j\leq n\}
\]
for each $n\in\mathbb N$. Then $\alpha n\leq\varphi(n)\leq\beta n$ for all $n\in\mathbb N$, so that $\varphi(n)/\log n\to\infty$ as $n\to\infty$. Moreover,
\[
0\leq\frac{\varphi(n+1)}{\sum_{k=1}^n\varphi(k)}
\leq\frac{\beta(n+1)}{\alpha n(n+1)/2}
=\frac{2\beta}{\alpha n}\to0
\quad\text{as } n\to\infty.
\]
Hence, by Proposition \ref{Ahn24 full dimension proposition}, we have
\begin{align} \label{F phi full dimension}
\hdim F(\varphi)=1.
\end{align}
Put $q_j\coloneqq\lfloor m_{j+1}/2\rfloor$ for each $j\in\mathbb N$. By \eqref{m growth positive alpha}, we have $q_j>(\beta/\alpha)m_j$ and $q_j<m_{j+1}$ for all sufficiently large $j$, and so
\[
\varphi(m_j)=\beta m_j
\quad\text{and}\quad
\varphi(q_j)=\alpha q_j.
\]
Therefore, every $x\in F(\varphi)$ satisfies
\[
\frac{\log d_{m_j}(x)}{m_j}\to\beta
\quad\text{and}\quad
\frac{\log d_{q_j}(x)}{q_j}\to\alpha
\quad\text{as } j\to\infty,
\]
which implies that $x\notin A(\alpha)$.\par
We now determine $\gamma(x)$ for $x\in F(\varphi)$. Since $\varphi(k)\geq\alpha k$ for all $k\in\mathbb N$, we have
\[
\liminf_{n\to\infty}\frac{2}{n^2}\sum_{k=1}^n\varphi(k)\geq\alpha.
\]
On the other hand, by the definition of $\varphi$,
\[
\varphi(k)\leq\alpha k+\sum_{i:m_i\leq k}\max\{\beta m_i-\alpha k,0\}
\]
for each $k\in\mathbb N$. Here, the $i$th summand can be positive only if $k<(\beta/\alpha)m_i$, and it is at most $\beta m_i$. Hence
\[
\sum_{k=1}^{q_j}\varphi(k)
\leq\alpha\frac{q_j(q_j+1)}2
+\frac{\beta^2}{\alpha}\sum_{i=1}^j m_i^2.
\]
Since $m_{i+1}\geq m_i^3\geq2m_i$ for each $i\in\mathbb N$, we have
\[
\sum_{i=1}^j m_i^2<\frac43m_j^2.
\]
Moreover, $q_j\geq m_{j+1}/3\geq m_j^3/3$ for all sufficiently large $j$. Consequently,
\[
\frac{2}{q_j^2}\sum_{k=1}^{q_j}\varphi(k)
\leq\alpha\left(1+\frac1{q_j}\right)
+\frac{24\beta^2}{\alpha m_j^4}\to\alpha
\quad\text{as } j\to\infty.
\]
Thus
\begin{align} \label{phi quadratic alpha}
\liminf_{n\to\infty}\frac{2}{n^2}\sum_{k=1}^n\varphi(k)=\alpha.
\end{align}
Now, let $x\in F(\varphi)$ and $\varepsilon\in(0,1)$. Then
\[
(1-\varepsilon)\varphi(k)\leq\log d_k(x)\leq(1+\varepsilon)\varphi(k)
\]
for all sufficiently large $k$. The finitely many remaining terms make no contribution after division by $n^2$. Hence, by \eqref{phi quadratic alpha}, we have $(1-\varepsilon)\alpha\leq\gamma(x)\leq(1+\varepsilon)\alpha$. On letting $\varepsilon\to0^+$, we obtain $\gamma(x)=\alpha$, and so $x\in S(\alpha)$ by Corollary \ref{S alpha characterization}. This proves
\[
F(\varphi)\subseteq S(\alpha)\setminus A(\alpha).
\]
Therefore, by \eqref{F phi full dimension} and the monotonicity of the Hausdorff dimension, we conclude that
\[
\hdim\bigl(S(\alpha)\setminus A(\alpha)\bigr)=1.
\]\par
We next consider the case $\alpha=0$. Choose a strictly increasing sequence $(m_j)_{j\in\mathbb N}$ of integers with $m_1\geq2$ such that
\[
m_{j+1}\geq m_j^4
\quad\text{and}\quad
m_{j+1}>4m_j^2
\]
for each $j\in\mathbb N$, and define a non-decreasing function $\varphi:\mathbb N\to(0,\infty)$ by
\[
\varphi(n)\coloneqq\max\{\sqrt n,\ m_j:m_j\leq n\}
\]
for each $n\in\mathbb N$. Then $\sqrt n\leq\varphi(n)\leq n$ for all $n\in\mathbb N$, so that $\varphi(n)/\log n\to\infty$ as $n\to\infty$. Moreover,
\[
0\leq\frac{\varphi(n+1)}{\sum_{k=1}^n\varphi(k)}
\leq\frac{n+1}{\int_0^n\sqrt{t}\,dt}
=\frac{3(n+1)}{2n^{3/2}}\to0
\quad\text{as } n\to\infty.
\]
Hence $\hdim F(\varphi)=1$ by Proposition \ref{Ahn24 full dimension proposition}. Put $q_j\coloneqq\lfloor m_{j+1}/2\rfloor$ for each $j\in\mathbb N$. Then $q_j>m_j^2$ and $q_j<m_{j+1}$ for all sufficiently large $j$, and so
\[
\varphi(m_j)=m_j
\quad\text{and}\quad
\varphi(q_j)=\sqrt{q_j}.
\]
Therefore, every $x\in F(\varphi)$ satisfies
\[
\frac{\log d_{m_j}(x)}{m_j}\to1
\quad\text{and}\quad
\frac{\log d_{q_j}(x)}{q_j}\to0
\quad\text{as } j\to\infty,
\]
which implies that $x\notin A(0)$.\par
By the definition of $\varphi$,
\[
\varphi(k)\leq\sqrt{k}+\sum_{i:m_i\leq k}\max\{m_i-\sqrt{k},0\}
\]
for each $k\in\mathbb N$. Here, the $i$th summand can be positive only if $k<m_i^2$, and it is at most $m_i$. Hence
\[
\sum_{k=1}^{q_j}\varphi(k)
\leq q_j^{3/2}+\sum_{i=1}^j m_i^3
<q_j^{3/2}+\frac87m_j^3.
\]
Since $q_j\geq m_{j+1}/3\geq m_j^4/3$ for all sufficiently large $j$, it follows that
\[
0\leq\frac{2}{q_j^2}\sum_{k=1}^{q_j}\varphi(k)
\leq\frac{2}{\sqrt{q_j}}+\frac{144}{7m_j^5}\to0
\quad\text{as } j\to\infty.
\]
Thus
\[
\liminf_{n\to\infty}\frac{2}{n^2}\sum_{k=1}^n\varphi(k)=0.
\]
By the same argument as in the case $0<\alpha<\infty$, we have $\gamma(x)=0$ for every $x\in F(\varphi)$. Hence
\[
F(\varphi)\subseteq S(0)\setminus A(0),
\]
and therefore $\hdim(S(0)\setminus A(0))=1$.\par
Finally, we show that $A(\infty)=S(\infty)$. Since $A(\infty)\subseteq S(\infty)$ as noted before the proof, it suffices to show that $S(\infty)\subseteq A(\infty)$. Let $x\in S(\infty)$. Then $\gamma(x)=\infty$ by Corollary \ref{S alpha characterization}. Since the digits are non-decreasing, we have
\[
\frac{2}{n^2}\sum_{k=1}^n\log d_k\leq\frac{2\log d_n}{n}
\]
for each $n\in\mathbb N$. Hence $(\log d_n)/n\to\infty$ as $n\to\infty$, i.e., $x\in A(\infty)$. This completes the proof of the theorem.
\end{proof}
\begin{remark}
By Corollary \ref{S alpha characterization} and Theorem \ref{difference theorem}, for each $\alpha\in[0,\infty)$, the condition in \eqref{definition A alpha} is sufficient but not necessary for a point to belong to $S(\alpha)$, and the set of points in $S(\alpha)$ which do not satisfy this condition has full Hausdorff dimension. Thus, as far as the upper and lower limits of $Q_x(N)$ are concerned, the relevant quantity is $\gamma(x)$ in \eqref{definition gamma}, rather than the limit of $(\log d_n(x))/n$. Moreover, Corollary \ref{S alpha characterization} shows that $S(\alpha)$ is finite replacement-invariant for every $\alpha\in[0,\infty]$, in the sense of \cite{Ahn25}, since $\gamma(x)$ is unchanged under any well-defined finite replacement of the Pierce expansion digits. This answers the question left open in \cite[Remark 4.2]{Ahn25}. We also note that $A(1)$ has full Lebesgue measure by \cite[Theorem 16]{Sha86}, and that $\gamma(x)=1$ for every $x\in A(1)$ by Proposition \ref{old product growth proposition}. Therefore, Theorem \ref{main theorem} recovers Shallit's law \eqref{Shallit law}.
\end{remark}

\end{document}